\newcommand{\CC}{{\mathbb C}}
\newcommand{\RR}{{\mathbb R}}
\newcommand{\DD}{{\mathbb D}}
\newcommand{\LL}{{\mathcal L}}
\newcommand{\GG}{{\mathcal G}}
\newcommand{\e}{\varepsilon}
\newcommand{\be}{\beta}
\newcommand{\al}{\alpha}
 \newcommand{\De}{\Delta}
\newcommand{\la}{\lambda}
\newcommand{\g}{\gamma}
\newcommand{\de}{\delta}
\newcommand{\ph}{\varphi}
\newcommand{\Ga}{\Gamma}
\newcommand{\Om}{\Omega}
\newcommand{\si}{\sigma}
\newcommand{\FF}{\mathcal{F}}
\newcommand{\EEE}{\mathbb{E}}
\newcommand{\ii}{\mbox{i}}
\begin{document}
\title[Subordination principle for space-time fractional equations]
{ Subordination principle for space-time fractional evolution equations and some applications }

\author{Emilia Bazhlekova}
\address{Institute of Mathematics and
Informatics, Bulgarian Academy of Sciences, Acad. G. Bonchev str., Bl. 8, Sofia 1113, Bulgaria}
\email{e.bazhlekova@math.bas.bg}

\subjclass[2010]{26A33, 33E12, 35R11, 47D06}
\keywords{space-time fractional diffusion equation, 
fractional Laplacian, Mittag-Leffler function, Mainardi function, Tricomi's confluent hypergeometric function}

\begin{abstract}
 The abstract Cauchy problem for the fractional evolution equation with the Caputo derivative of order $\be\in(0,1)$ and operator $-A^\al$, $\al\in(0,1)$, is considered, where $-A$ generates a strongly continuous one-parameter semigroup on a Banach space. Subordination formulae for the solution operator are derived, which are integral representations containing a subordination kernel (a scalar probability density function) and a $C_0$-semigroup of operators. Some properties of the subordination kernel are established and representations in terms of Mainardi function and L\'evy extremal stable densities are derived. 
			Applications of the subordination formulae are given with a special focus on the multi-dimensional space-time fractional diffusion equation for some special values of the parameters.
\end{abstract}

\maketitle

\numberwithin{equation}{section}
\newtheorem{theorem}{Theorem}[section]
\newtheorem{lemma}[theorem]{Lemma}
\newtheorem{example}[theorem]{Example}
\newtheorem{corollary}[theorem]{Corollary}
\newtheorem{prop}[theorem]{Proposition}
\newtheorem{definition}[theorem]{Definition}
\newtheorem{remark}[theorem]{Remark}
\newtheorem{proposition}[theorem]{Proposition}

\section{ Introduction}

Partial differential equations with both time- and space-fractional differential operators have found numerous applications in the modelling of anomalous diffusion phenomena  
and are nowadays a subject of extensive research, see e.g. 
\cite{Meerschaert, Bonito, Pasciak, LuchkoCAMWA, Chen, GMainardi, GMsub, Hanyga, LG, LuchkoMMNP, LuchkoAxioms, Luchko, Luchkosub,  spacetimepdf, MainardiPagniniGorenflo, Meerschaert2002, Pagnini}, 
to mention only an excerpt from the long list of relevant publications.

Consider the space-time fractional diffusion equation 
\begin{equation}\label{st}
	\DD_t^\be u(x,t)=-(-\De)^\al u (x,t),\ \ \ \ t>0, x\in\RR^n;\ \ \ \ u(x,0)=v(x);
	\end{equation}
where $0<\al,\be\le 1$, $\DD_t^\be$ is the Caputo time-fractional derivative and $-(-\De)^\al$ denotes a realization of the fractional Laplace operator acting in space.
  
	There exist different definitions of the Laplacian and
	 the fractional Laplacian in the literature, see e.g.
\cite{Bonito, Y2012, critical, FCAA10, FL, Martinez, Y2010}. Once the Laplace operator $\De$ is defined, one of the commonly used definitions for the fractional Laplacian is provided by the fractional power of the nonnegative 
operator $-\De$ according to the Balakrishnan definition \cite{Balakrishnan, Yosida}
\begin{equation}\label{Deal}
(-\De)^\al v=\frac{\sin \al\pi}{\pi}\int_0^\infty \la^{\al-1}(\la -\De)^{-1}(-\De) v\, d\la,\ \ 0<\al<1,
\end{equation}
for $v\in D(\De)$ - the domain of the considered Laplace operator.
In the case of the full-space fractional Laplace operator in $\RR^n$, the defined in (\ref{Deal}) fractional power $(-\De)^\al$ coincides with the following pseudo-differential operator 
$$
(\FF(-\De)^\al f)(\kappa)=|\kappa|^{2\al}(\FF f)(\kappa),
$$
where $\FF$ denotes the Fourier transform.
 In particular, in the one-dimensional case $-(-\De)^\al$ coincides with the Riesz space-fractional derivative of order $2\al$. We refer to  the survey paper \cite{FCAA10}, where the equivalence of ten different definitions of full-space fractional Laplacian is proven. 
Concerning the fractional Laplacian on a bounded domain $\Om\subset\RR^n$, frequently used is the spectral definition (suitable as well for more general elliptic boundary value operators), according to which the fractional power $(-\De)^\al$ is defined in terms of Fourier series as follows
\begin{equation}\label{fl}
(-\De)^\al v=\sum_{j=1}^\infty \la_j^\al (v,\phi_j)\phi_j.
\end{equation}
Here $(.,.)$ denotes the $L^2(\Om)$ inner product and $\phi_j$ is an $L^2(\Om)$-orthonormal basis of eigenfunctions of $-\De$ with eigenvalues $\la_j$.
The spectral definition is equivalent to the Balakrishnan definition in $L^2(\Om)$, see e.g. \cite{Bonito, Pasciak, Duan}.  

The space-time fractional diffusion equation (\ref{st}) is extensively studied. The Cauchy problem for the spatially one-dimensional equation is analyzed in 
\cite{ Meerschaert, LG, LuchkoAxioms, Luchko, Luchkosub, spacetimepdf, MainardiPagniniGorenflo, Pagnini}. 
The multi-dimensional space-time fractional diffusion equation on $\RR^n$ is considered in \cite{LuchkoCAMWA, Hanyga, LuchkoMMNP, Luchko, Luchkosub}, see also \cite{GMsub, Meerschaert2002} for studies in the context of stochastic solutions and inverse stable subordinators. Multi-dimensional problems on a finite domain are discussed in \cite{Meerschaert, Pasciak, Chen}.

By means of a subordination formula it is possible to construct new solutions from known ones, e.g. solutions of fractional order equations from the solutions of the classical diffusion or wave equations. 
It is worth noting that the principle of subordination is closely related to the concept of subordination in stochastic processes  \cite{Feller}. 
Subordination formulae for the one-dimensional space-time fractional diffusion equation are established in \cite{GMsub, MainardiPagniniGorenflo, Pagnini}.
In \cite{Luchkosub} subordination principles for the multi-dimensional space-time fractional diffusion-wave equation are deduced, where the subordination kernel is expressed in terms of four-parameters Wright functions, introduced in \cite{LG}. In the above works the technique of Mellin transform is applied  for derivation of the subordination formulae.
In the setting of abstract Cauchy problems a subordination formula is established in \cite{Yosida}, which relates the $C_0$ - semigroups generated by the operators  $-A$ and $-A^\al$, $\al\in(0,1)$, where the fractional power is in the sense of the Balakrishnan definition. Concerning time-fractional evolution equations, the subordination principle is studied in \cite{Subordination0, Baj} and  extended to equations with more general time-fractional operators in \cite{K, ITSF, Math, JCAM, FCAA18}.  In \cite{MiaoLi} a generalized subordination principle is discussed for fractional evolution equations with operator $-A^\al$, where $-A$ is a generator of a fractional resolvent family. Subordination principle in the setting of abstract Volterra equations is studied in \cite{Pruss}, Chapter~4.

This work is concerned with the subordination principle for 
the abstract Cauchy problem for the space-time fractional evolution equation 
	\begin{equation}\label{ab0}
	\DD_t^\be u(t)=-A^\al u (t),\ t>0;\ \ u(0)=v\in X;\ \ \ \ 0<\al,\be\le 1,
	\end{equation}
where 
$-A$ is a generator of a bounded $C_0$-semigroup in a Banach space $X$ and 
$A^\al$ denotes the $\al$-th fractional power of the operator $A$ according to the Balakrishnan definition, see (\ref{Aal}).
Applying successively known subordination results in space and in time, we derive a subordination formula for the solution operator $S_{\al,\be}(t)$ of problem (\ref{ab0}) in the form
\begin{equation}\label{sub0}
S_{\al,\be}(t)=\int_0^\infty \psi_{\al,\be}(t,\tau) S_{1,1}(\tau)\,d \tau,\ \ t>0,
\end{equation}
where $S_{1,1}(t)$ is the $C_0$- semigroup of operators generated by the operator $-A$ 
and $\psi_{\al,\be}(t,\tau)$ is a unilateral probability density function (pdf) in $\tau$, which means that
\begin{equation}\label{pdf}
\psi_{\al,\be}(t,\tau)\ge 0, \ \ \int_0^\infty \psi_{\al,\be}(t,\tau) \,d \tau=1.
\end{equation}
It appears that the subordination kernel $\psi_{\al,\be}(t,\tau)$ 
can be expressed in terms of  
L\'evy one-sided stable extremal pdf and Mainardi function. 
Moreover, we prove that the  kernel $\psi_{\al,\be}(t,\tau)$ considered as a function of $t$ admits a bounded analytic extension to a sector in the complex plane, which implies analyticity of the solution operator  $S_{\al,\be}(t)$ at least in the same sector. By applying the subordination formula to the multi-dimensional space-time fractional diffusion equation for special values of the parameters, we recover some known expressions for the solution, but also derive some analytical formulae and asymptotic expressions in the case $\al=\be=1/2$, which, to the best knowledge of the author, are new. 
The main tool in this work is the familiar technique of Laplace transform.


The paper is organized as follows. In Section~2 the main subordination formula is derived and some properties of the subordination kernel are established. Section~3 is devoted to scaling properties of the subordination kernel and relation to L\'evy extremal stable density and Mainardi function. In Section~4 the subordination formulae are applied  to find analytic solutions to three different problems.  Definitions of fractional calculus operators, Mittag-Leffler functions and some other special functions used in this paper, together with their basic properties, are given in an Appendix.




\section{Subordination formulae}

The main tool in this work is the Laplace transform  $$\mathcal{L}\{f(t); t\to s\}=\int_0^\infty e^{-st} f(t)\, d t.$$
For convenience the following notation  for 
 the double Laplace transform of a function of two variables is also used
$$ \mathcal{L}^2\{f(t,\tau); t\to s,\tau\to\la\}=\int_0^\infty  \int_0^\infty e^{-(st+\la\tau)}f(t,\tau)\, d t\,d\tau.$$

Consider a  Banach space $X$ with norm $\|.\|$ and a closed linear operator $A$ with dense domain $D(A)\subset X$. 
Denote by $\varrho(A)$ the resolvent set of $A$. 
Assume the operator $-A$ is the infinitesimal generator of a bounded  $C_0$-semigroup (see e.g. \cite{Laplace, Yosida}).  Therefore, $A$ is a non-negative operator, i.e.  $(-\infty, 0)\subset\varrho(A)$ and
$$
\|\la(\la+A)^{-1}\|<M<\infty,\ \ \la>0.
$$
For $0<\al<1$ we define the fractional power $A^\al$ of the non-negative operator $A$ using the Balakrishnan definition \cite{Balakrishnan, Yosida}
\begin{equation}\label{Aal}
A^\al v=\frac{\sin \al\pi}{\pi}\int_0^\infty \la^{\al-1}(\la +A)^{-1}Av\, d\la,\ \ v\in D(A).
\end{equation}
Then $-A^\al$ is a closed densely defined operator, which generates a bounded analytic $C_0$-semigroup \cite{Yosida}.
Therefore the Cauchy problem for the fractional evolution equation
\begin{equation}\label{ab}
	\DD_t^\be u(t)=-A^\al u (t),\ t>0;\ \ u(0)=v\in X;\ \ \ \ \ 0<\al\le 1,\ \ 0<\be\le 1,
	\end{equation}
where $\DD_t^\be$ is the Caputo time-fractional derivative, is well posed (\cite{Baj}, Theorem~3.1).
The definitions of well-posedness, strong solution, and solution operator for problem (\ref{ab}) are standard, see e.g. \cite{Baj}.

To derive a subordination formula relating the solution operator $S_{\al,\be}(t)$ of problem (\ref{ab}) with the solution operator $S_{1,1}(t)$ of the classical abstract Cauchy problem 
\begin{equation*}
 u'(t)=-Au(t),\ \ t>0;\ \ \ \ u(0)=v\in X.
\end{equation*}
  we apply successively two known subordination results. 

First, let us set $\be=1$ in (\ref{ab}) and apply a classical theorem (see \cite{Yosida}, Chapter~IX) according to which the operator $-A^\al$ generates a bounded analytic semigroup $S_{\al,1}(t)$, related to the semigroup $S_{1,1}(t)$ via the identity
\begin{equation}\label{suba}
S_{\al,1}(t)=\int_0^\infty f_{\al}(t,\tau) S_{1,1}(\tau)\,d \tau,\ \ t>0,
\end{equation}
where the subordination kernel $f_{\al}(t,\tau)$ is defined by the inverse Laplace integral
\begin{equation}\label{fdef}
f_{\al}(t,\tau)=\frac{1}{2\pi \ii}\int_{\si-\ii \infty}^{\si+\ii \infty} e^{z\tau-t z^\al}\,d z,\ \ \si>0.
\end{equation}
The semigroup $S_{\al,1}(t)$ is the solution operator to the Cauchy problem
\begin{equation}\label{a}
 u'(t)=-A^\al u(t),\ \ t>0;\ \ \ \ u(0)=v\in X.
\end{equation}
It is worth noting that in the scalar case $A=\la>0$ relation (\ref{suba}) reads
\begin{equation}\label{subas}
e^{-\la^\al t}=\int_0^\infty f_{\al}(t,\tau) e^{-\la \tau}\,d \tau,\ \ t>0,
\end{equation}
which is in agreement with the definition (\ref{fdef}) of the kernel $f_{\al}(t,\tau)$ as the inverse Laplace transform of $e^{-\la^\al t}$.

Second, according to the subordination principle for fractional evolution equations \cite{Baj, Subordination0}, the well-posedness of problem (\ref{a}) implies well-posedness of problem (\ref{ab}) for all $\be\in(0,1)$ and the corresponding solution operator $S_{\al,\be}(t)$ is expressed by the formula
 \begin{equation}\label{subab}
S_{\al,\be}(t)=\int_0^\infty \ph_{\be}(t,\tau) S_{\al,1}(\tau)\,d \tau,\ \ t>0,
\end{equation}
where 
\begin{equation}\label{phidef}
\ph_{\be}(t,\tau)=\frac{1}{2\pi \ii}\int_{\si-\ii \infty}^{\si+\ii \infty} z^{\be-1}e^{zt-\tau z^\be}\,d z,\ \ \si>0.
\end{equation}
Since in the scalar case $A=\la>0$ the solution operator $S_{\al,\be}(t)$ of problem (\ref{ab}) is given by the Mittag-Leffler function 
$
E_\be(-\la^\al t^\be),
$
the scalar version of relation (\ref{subab}) is
\begin{equation}\label{subabs}
E_\be(-\la^\al t^\be)=\int_0^\infty \ph_{\be}(t,\tau) e^{-\la^\al \tau}\,d \tau,\ \ t>0.
\end{equation}
This holds for any $0<\al\le 1$, while the function $\ph_{\be}(t,\tau)$ is independent of $\al$.

As a result of the successive application of the above two steps we deduce
\begin{eqnarray}
S_{\al,\be}(t)&=&\int_0^\infty \ph_{\be}(t,\si) \int_0^\infty f_{\al}(\si,\tau) S_{1,1}(\tau)\,d \tau \,d \si\nonumber\\
&=&\int_0^\infty\left(\int_0^\infty \ph_{\be}(t,\si)f_{\al}(\si,\tau)\,d \si\right)   S_{1,1}(\tau)\,d \tau. \nonumber
\end{eqnarray}
In this way  the subordination formula 
\begin{equation}\label{sub}
S_{\al,\be}(t)=\int_0^\infty \psi_{\al,\be}(t,\tau) S_{1,1}(\tau)\,d \tau,\ \ t>0,
\end{equation}
 is derived, where the subordination kernel $\psi_{\al,\be}(t,\tau)$ admits the representation
\begin{equation}\label{psidef}
\psi_{\al,\be}(t,\tau)=\int_{0}^{\infty} \ph_\be(t,\si) f_\al(\si,\tau)\,d\si.
\end{equation}
In the scalar case the subordination identity (\ref{sub}) reduces to
\begin{equation}\label{subscalar}
E_\be(-\la^\al t^\be)=\int_0^\infty \psi_{\al,\be}(t,\tau) e^{-\la\tau}\,d \tau,\ \ t>0.
\end{equation}
 The obtained in this way Laplace transform pair for the subordination kernel $\psi_{\al,\be}(t,\tau)$ with respect to the variable $\tau$ can as well be derived  from the definition (\ref{psidef}), by the use of (\ref{subas}) and (\ref{subabs}).

To find the Laplace transform of the subordination kernel $\psi_{\al,\be}(t,\tau)$ with respect to the variable $t$, we note first that (\ref{subscalar}) and (\ref{pairs}) imply
\begin{equation}\label{psipair3}
\LL^2\{\psi_{\al,\be}(t,\tau) ; t\to s, \tau\to \la\}=\LL\{E_\be(-\la^\al t^\be) ; t\to s\}=\dfrac{s^{\be-1}}{s^\be+\la^\al}.
\end{equation}
 Then, taking inverse Laplace transform $\LL^{-1}\left\{\cdot ; \la\to \tau\right\}$ in (\ref{psipair3}) we deduce again by the use of (\ref{pairs}): 
\begin{equation}\label{psipair2}
\LL\{ \psi_{\al,\be}(t,\tau) ; t\to s\}=
s^{\be-1}\tau^{\al-1}E_{\al,\al}(-s^\be \tau^\al).
\end{equation}

In the limiting case $\al=1$ and $\be=1$ the subordination kernels are Dirac delta functions 
\begin{equation}\label{pc00}
f_1(t,\tau)=\ph_1(t,\tau)=\psi_{1,1}(t,\tau)=\de(t-\tau).
\end{equation}
Moreover, the kernels $f_\al(t,\tau)$ and $\ph_\be(t,\tau)$ are particular cases of the composite kernel $\psi_{\al,\be}(t,\tau)$, namely
\begin{equation}\label{pc}
f_\al(t,\tau)=\psi_{\al,1}(t,\tau),\ \ \ \ph_\be(t,\tau)=\psi_{1,\be}(t,\tau).\ \ 
\end{equation}
Therefore, the Laplace transform pairs for $f_\al(t,\tau)$ and $\ph_\be(t,\tau)$ can be derived from the identities (\ref{subscalar}), (\ref{psipair3}), and (\ref{psipair2}), taking $\be=1$ or $\al=1$, respectively. 




\begin{remark} It is worth noting that the integral expression in (\ref{psidef}) is not commutative: $
\psi_{\al,\be}(t,\tau) \not\equiv \int_{0}^{\infty}f_\al(t,\si) \ph_\be(\si,\tau) \,d\si$. 
This is due to the fact that the order of the two steps in the derivation procedure of subordination identity (\ref{sub}) is essential.
\end{remark}

For example, let us consider the case $\al=\be=1/2$, in which the subordination kernels can be expressed in terms of elementary functions as follows (e.g. \cite{Laplace, Yosida}): 
	\begin{equation}\label{pf}
	f_{1/2}(t,\tau)=\frac{t e^{-t^2/4\tau}}{2\sqrt{\pi}\tau^{3/2}},\ \ \ph_{1/2}(t,\tau)=\frac{1}{\sqrt{\pi t}}e^{-\tau^2/4t}.
	\end{equation}
		Plugging expressions (\ref{pf}) in the composition rule (\ref{psidef})  we get
	\begin{equation}\label{psipsi}
	\psi_{1/2,1/2}(t,\tau)=\int_{0}^{\infty} \ph_{1/2}(t,\si) f_{1/2}(\si,\tau)\,d\si=\frac{\sqrt{t}}{\pi\sqrt{\tau}(t+\tau)}.
	\end{equation}
The last formula can also be directly derived from eq. (\ref{aa}).
On the other hand, 
\begin{equation}\label{psipsi99}
\int_{0}^{\infty}f_{1/2}(t,\si) \ph_{1/2}(\si,\tau) \,d\si=\frac{2t}{\pi(t^2+\tau^2)},
\end{equation}
which can be obtained by introducing a new integration variable $(t^2+\tau^2)/4\si$. 

A comparison of identities (\ref{psipsi}) and (\ref{psipsi99}) confirms the non-commutativity pointed out in Remark~1.


Next we establish some properties of the subordination kernels based on the Laplace transform pairs (\ref{subscalar}), (\ref{psipair3}), (\ref{psipair2}), and identities (\ref{pc}).

\begin{proposition}\label{p1}
The function 
$\psi_{\al,\be}(t,\tau)$, $0<\al,\be\le 1$, is a unilateral probability density in $\tau$, i.e. it satisfies (\ref{pdf}).
\end{proposition}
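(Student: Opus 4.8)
The plan is to read off both assertions in (\ref{pdf}) from the scalar Laplace transform pair (\ref{subscalar}): for each fixed $t>0$ it says that $\tau\mapsto\psi_{\al,\be}(t,\tau)$ has Laplace transform $\la\mapsto E_\be(-\la^\al t^\be)$. The normalization is then a short limit computation, while positivity --- the substantive point --- rests on a complete monotonicity argument.

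For positivity, fix $t>0$ and set $g(\la)=E_\be(-\la^\al t^\be)$, $\la>0$. I would invoke the classical fact that $x\mapsto E_\be(-x)$ is completely monotone on $[0,\infty)$ for $0<\be\le1$ (hence so is $x\mapsto E_\be(-t^\be x)$), together with the fact that $\la\mapsto\la^\al$ is a Bernstein function for $0<\al\le1$. Since a completely monotone function composed with a Bernstein function is again completely monotone, $g$ is completely monotone on $(0,\infty)$, so by Bernstein's theorem it is the Laplace transform of a unique non-negative Borel measure on $[0,\infty)$. Uniqueness of the Laplace transform together with (\ref{subscalar}) identifies that measure with $\psi_{\al,\be}(t,\tau)\,d\tau$, whence $\psi_{\al,\be}(t,\tau)\ge0$ for a.e. $\tau>0$; as $\psi_{\al,\be}(t,\cdot)$ is continuous (being given by the Bromwich-type integral (\ref{psidef}) assembled from (\ref{fdef}) and (\ref{phidef})), the inequality holds for all $\tau>0$. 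Alternatively, positivity follows directly from (\ref{psidef}) once one knows the kernels $f_\al$ of (\ref{suba}) and $\ph_\be$ of (\ref{subab}) are non-negative.

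For the normalization, now using $\psi_{\al,\be}(t,\cdot)\ge0$, I would let $\la\downarrow0$ in (\ref{subscalar}): on the left $\la^\al\to0$ and $E_\be$ is continuous at $0$, so $E_\be(-\la^\al t^\be)\to E_\be(0)=1$; on the right monotone convergence gives $\int_0^\infty e^{-\la\tau}\psi_{\al,\be}(t,\tau)\,d\tau\to\int_0^\infty\psi_{\al,\be}(t,\tau)\,d\tau$. Hence $\int_0^\infty\psi_{\al,\be}(t,\tau)\,d\tau=1$, which with the positivity above gives (\ref{pdf}); the degenerate case $\al=\be=1$ is covered too, $\de(t-\cdot)$ being trivially a probability measure by (\ref{pc00}).

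I expect the only real obstacle to be positivity, and within it the appeal to the complete monotonicity of $E_\be(-\cdot)$ and its stability under composition with the Bernstein function $\la^\al$; with that input the argument is otherwise routine.
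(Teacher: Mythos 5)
Your proposal is correct and follows essentially the same route as the paper's proof: nonnegativity via Bernstein's theorem applied to the completely monotone function $E_\be(-\la^\al t^\be)$ (composition of the completely monotone $E_\be(-ax)$ with the Bernstein function $\la^\al$), and normalization by letting $\la\to 0$ in (\ref{subscalar}). The extra details you supply (uniqueness of the Laplace transform, monotone convergence) only flesh out steps the paper leaves implicit.
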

\begin{proof}The normalization identity can be derived by letting $\la\to 0$ in (\ref{subscalar}):
$$
\int_0^\infty \psi_{\al,\be}(t,\tau) \,d \tau=E_\be(0)=1.
$$ The nonnegativity of the function $\psi_{\al,\be}(t,\tau)$ can be established from its Laplace transform (\ref{subscalar}) by applying Bernstein's theorem. Indeed, the Mittag-Leffler function $E_\be(-\la^\al t^\be)$ is completely monotone as a function of $\la>0$ for any fixed $t>0$ (as a composition of the completely monotone function $E_\be(-ax)$ and the Bernstein function $\la^\al$).  Definitions of these classes of functions are given in the Appendix.
\end{proof}

\begin{remark} Proposition~\ref{p1} and the subordination identity (\ref{sub}) imply that if $-A$ generates a bounded $C_0$-semigroup, i.e. $\|S_{1,1}(t)\|\le M$, $t\ge 0$, then $\|S_{\al,\be}(t)\|\le M$ for any $t\ge 0$, $0<\al\le 1$ and $0<\be\le 1$. 
In addition, if $X$ is an ordered Banach space, positivity of the $C_0$-semigroup $S_{1,1}(t)$ generated by the operator $-A$ implies positivity of the solution operator $S_{\al,\be}(t)$ for any $0<\al\le 1$ and $0<\be\le 1$. 
 \end{remark}

\begin{proposition}\label{p2}
Assume $0<\al,\al',\be,\be'\le 1$. Then
\begin{eqnarray}
f_{\al\al'}(t,\tau)&=&\int_{0}^{\infty} f_\al(t,\si) f_{\al'}(\si,\tau)\,d\si,
\label{ffab}\\
\ph_{\be\be'}(t,\tau)&=&\int_{0}^{\infty} \ph_\be(t,\si) \ph_{\be'}(\si,\tau)\,d\si.
\label{phphab}
\end{eqnarray}
\end{proposition}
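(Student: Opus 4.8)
The plan is to prove both identities by computing the Laplace transform of each side and invoking uniqueness of the Laplace transform. The essential input is the transform characterization already contained in the definitions (\ref{fdef}) and (\ref{phidef}): $f_\al(t,\tau)$ is the inverse Laplace transform in $\tau$ of $e^{-t\la^\al}$, i.e. $\LL\{f_\al(t,\tau);\tau\to\la\}=e^{-t\la^\al}$ (cf. (\ref{subas})), while $\ph_\be(t,\tau)$ is the inverse Laplace transform in $t$ of $s^{\be-1}e^{-\tau s^\be}$, i.e. $\LL\{\ph_\be(t,\tau);t\to s\}=s^{\be-1}e^{-\tau s^\be}$ (consistent with (\ref{psipair2}) for $\al=1$). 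Note also that $\al\al'\in(0,1]$ and $\be\be'\in(0,1]$, so the left-hand sides are well defined.

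For (\ref{ffab}) I would apply $\LL\{\,\cdot\,;\tau\to\la\}$ to the right-hand side. By (\ref{pc}) the kernels $f_\al=\psi_{\al,1}$ and $f_{\al'}=\psi_{\al',1}$ are nonnegative thanks to Proposition~\ref{p1}, so Tonelli's theorem permits interchanging the integrations, and then applying the $\tau$-transform characterization twice gives $\int_0^\infty f_\al(t,\si)\left(\int_0^\infty e^{-\la\tau}f_{\al'}(\si,\tau)\,d\tau\right)d\si=\int_0^\infty f_\al(t,\si)e^{-\si\la^{\al'}}\,d\si=e^{-t(\la^{\al'})^{\al}}=e^{-t\la^{\al\al'}}$, which is exactly $\LL\{f_{\al\al'}(t,\tau);\tau\to\la\}$. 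Uniqueness of the Laplace transform then yields (\ref{ffab}).

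For (\ref{phphab}) the symmetric step is to apply $\LL\{\,\cdot\,;t\to s\}$ to the right-hand side; nonnegativity (now via $\ph_\be=\psi_{1,\be}$, $\ph_{\be'}=\psi_{1,\be'}$ in (\ref{pc})) again justifies Tonelli, and using the $t$-transform characterization of $\ph_\be$ — first in the variable $t$ and then, with frequency $s^\be$, in the variable $\si$ — gives $\int_0^\infty\left(s^{\be-1}e^{-\si s^\be}\right)\ph_{\be'}(\si,\tau)\,d\si=s^{\be-1}(s^\be)^{\be'-1}e^{-\tau(s^\be)^{\be'}}=s^{\be\be'-1}e^{-\tau s^{\be\be'}}$, which equals $\LL\{\ph_{\be\be'}(t,\tau);t\to s\}$, and we conclude as before.

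I do not expect a genuine obstacle: each identity is just a double application of the relevant scalar subordination relation, and the only point needing care is the interchange of integrals, which is immediate from the nonnegativity of $f_\al$ and $\ph_\be$ furnished by Proposition~\ref{p1} via (\ref{pc}). As an alternative viewpoint worth recording, (\ref{ffab}) is the scalar ($A=\la>0$) case of the operator identity obtained by iterating the Balakrishnan subordination (\ref{suba}) for $-A$ and then for $-A^{\al'}$, using $(A^{\al'})^{\al}=A^{\al\al'}$, while (\ref{phphab}) corresponds similarly to iterating the Mittag-Leffler subordination (\ref{subabs}); but the Laplace-transform argument above is self-contained and handles both cases uniformly.
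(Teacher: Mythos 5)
Your proposal is correct and follows essentially the same route as the paper: characterize the kernels through their Laplace transforms, compute the transform of the right-hand side via Fubini/Tonelli, and conclude by uniqueness of the Laplace transform. The only (minor) difference is in (\ref{phphab}), where you take a single Laplace transform in $t$ and use $\LL\{\ph_{\be'}(\si,\tau);\si\to s^\be\}=s^{\be(\be'-1)}e^{-\tau s^{\be\be'}}$ directly, whereas the paper applies the double transform $\LL^2$ and passes through the Mittag-Leffler function $E_{\be'}(-\la\si^{\be'})$ and the pair (\ref{pairs}); your variant is slightly leaner but substantively the same argument.
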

\begin{proof}
To prove (\ref{ffab}) we apply Laplace transform with respect to $\tau$ 
and obtain by using (\ref{subscalar}) with $\be=1$ and Fubini's theorem $\LL\{f_{\al\al'}(t,\tau);\tau\to \la\}=e^{-t\la^{\al\al'}}$ and
$$
\LL\left\{ \int_{0}^{\infty} f_\al(t,\si) f_\al'(\si,\tau)\,d\si ; \tau\to \la\right\}=\int_{0}^{\infty} f_\al(t,\si) e^{-\si\la^{\al'}}\,d\si=e^{-t\la^{\al\al'}}.
$$
To prove (\ref{phphab}) we apply double Laplace transform, which gives by using  (\ref{subscalar}) and (\ref{psipair3}) with $\al=1$ 
\begin{eqnarray}
&&\LL^2\left\{ \int_{0}^{\infty} \ph_\be(t,\si) \ph_{\be'}(\si,\tau)\, d\si ; t\to s, \tau\to \la\right\}\nonumber\\
&&=s^{\be-1}\int_{0}^{\infty} e^{-\si s^\be} E_\be(-\la \si^{\be'})\, d\si=s^{\be-1}\frac{s^{\be({\be'}-1)}}{s^{\be\be'}+\la}=\frac{s^{\be\be'-1}}{s^{\be\be'}+\la}\nonumber\\
&&=\LL^2\{ \ph_{\be\be'}(t,\tau) ; t\to s, \tau\to \la\}.\nonumber
\end{eqnarray}
To finish the proof it remains to apply the uniqueness property of Laplace transform.
\end{proof}


Let us note that (\ref{ffab}) is equivalent to  the following natural operator identity 
\begin{equation}\label{AAA}
(A^\al)^{\al'}=(A^{\al'})^\al=A^{\al\al'},\ \ \ 0<\al,\al'\le 1,
\end{equation}
for a generator $-A$ of a bounded $C_0$-semigroup. Indeed, (\ref{ffab}) together with subordination formula (\ref{suba})  shows that any of the operators  in (\ref{AAA}) is infinitesimal generator of one and the same semigroup: $S_{\al\al',1}(t)$. 
For a different proof of (\ref{AAA}) see e.g. \cite{Yosida}, Chapter~IX. 

Identity (\ref{phphab}) is related to successive application of the subordination principle for time-fractional evolution equations and is in agreement with \cite{Baj}, Theorem~3.1. 

Next we prove that the subordination kernel $\psi_{\al,\be}(t,\tau)$, considered as a function of $t$, admits a bounded analytic extension to a sector in the complex plane. 
To this end we apply the following statement (\cite{Pruss}, Theorem 0.1.): 

If $G$ is a function defined on $(0,\infty)$ and $\theta_0\in (0,\pi/2]$ then the assertions (i) and (ii) are equivalent:

(i) $G(s)$ admits analytic extension to the sector $|\arg s|<\pi/2+\theta_0$ and $sG(s)$ is bounded on each sector $|\arg s|\le \pi/2+\theta$, $\theta<\theta_0$;

(ii) there is a function $g(t)$ analytic for $|\arg t|<\theta_0$ and bounded on each sector  $|\arg t|\le\theta<\theta_0$, such that $G(s)=\LL\{g(t); t\to s\}$ for each $s>0$.

\begin{proposition}\label{p3}
Assume $0<\al,\be\le 1$, $\al+\be<2$, and let
\begin{equation}\label{theta}
\theta_0=\min\left\{\frac{(2-\al-\be)\pi}{2\be},\frac\pi 2\right\}.
\end{equation}
For any $\tau>0$ the function $\psi_{\al,\be}(t,\tau)$ as a function of $t$ admits analytic extension to the sector $|\arg t|<\theta_0$, which is bounded on each sector $|\arg t|\le\theta$, $0<\theta<\theta_0$. 
\end{proposition}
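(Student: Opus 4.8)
The plan is to apply the above-quoted statement from \cite{Pruss} (Theorem 0.1) to the Laplace transform of $\psi_{\al,\be}(t,\tau)$ in the variable $t$, which by (\ref{psipair2}) is available in closed form. Fix $\tau>0$ and set
$$
G(s):=\LL\{\psi_{\al,\be}(t,\tau);t\to s\}=s^{\be-1}\tau^{\al-1}E_{\al,\al}(-s^\be\tau^\al),\qquad s>0 .
$$
It suffices to verify assertion (i) of that theorem: that $G$ extends analytically to $|\arg s|<\pi/2+\theta_0$ and that $s\,G(s)$ is bounded on every sector $|\arg s|\le\pi/2+\theta$, $\theta<\theta_0$. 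Granting this, assertion (ii) yields a function $g$ analytic on $|\arg t|<\theta_0$ and bounded on each subsector with $\LL\{g\}=G$, and by uniqueness of the Laplace transform $g$ coincides with $\psi_{\al,\be}(\cdot,\tau)$ on $(0,\infty)$, which is exactly the claim. The analyticity part of (i) is immediate: $E_{\al,\al}$ is entire, and $s\mapsto s^{\be-1}$, $s\mapsto s^\be$ are analytic on the cut plane $\{|\arg s|<\pi\}$, which contains $\{|\arg s|<\pi/2+\theta_0\}$ since $\theta_0\le\pi/2$ by (\ref{theta}).

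The substance is therefore the boundedness of $s\,G(s)=s^\be\,\tau^{\al-1}E_{\al,\al}(-s^\be\tau^\al)$. First I would pin down the argument of $z:=-s^\be\tau^\al$: from (\ref{theta}) one checks $\be(\pi/2+\theta)<(2-\al)\pi/2$ for every $\theta<\theta_0$, so for $|\arg s|\le\pi/2+\theta$ we have $|\arg(s^\be)|=\be|\arg s|<(2-\al)\pi/2<\pi$, whence $z$ lies in the cut plane with
$$
\frac{\al\pi}{2}\;<\;\pi-\be\Big(\frac{\pi}{2}+\theta\Big)\;\le\;|\arg z|\;\le\;\pi ;
$$
thus $z$ stays, uniformly over the sector, in a fixed closed subsector of $\{\al\pi/2<|\arg z|\le\pi\}$, the region where the Mittag-Leffler function decays. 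There the classical asymptotics give $E_{\al,\al}(z)=O(|z|^{-1})$ as $|z|\to\infty$ — in fact the $z^{-1}$-coefficient $1/\Gamma(\al-\al)=1/\Gamma(0)$ vanishes, so the decay is even of order $|z|^{-2}$; for the endpoint $\al=1$ one argues instead with the exponential decay of $E_{1,1}(z)=e^z$ as $\Re z\to-\infty$, which applies because here $|\arg z|>\al\pi/2=\pi/2$. Together with boundedness of $E_{\al,\al}$ on compact sets this gives $|E_{\al,\al}(-s^\be\tau^\al)|\le C(1+\tau^\al|s|^\be)^{-1}$ uniformly over the sector, hence
$$
|s\,G(s)|=|s|^\be\tau^{\al-1}\big|E_{\al,\al}(-s^\be\tau^\al)\big|\le C\,\tau^{\al-1}\,\frac{|s|^\be}{1+\tau^\al|s|^\be}\le C\,\tau^{-1},
$$
which establishes (i) and completes the argument.

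I expect the only delicate point to be the compatibility of the uniform location of $\arg(-s^\be\tau^\al)$ with the range of validity of the Mittag-Leffler asymptotic expansion near $|\arg z|=\pi$: one must check that the closed sector $\{\pi-\be(\pi/2+\theta)\le|\arg z|\le\pi\}$ is contained in some sector $\{\mu\le|\arg z|\le\pi\}$ with $\al\pi/2<\mu<\min\{\pi,\al\pi\}$ on which the algebraic expansion holds, and this is precisely what the strict inequality $\pi-\be(\pi/2+\theta)>\al\pi/2$ — i.e.\ the exact form of $\theta_0$ in (\ref{theta}) — guarantees; it is also what pins down $\theta_0$ as the best sector obtainable by this route. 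A minor, routine ingredient is the separate treatment of $\al=1$ (where $\al+\be<2$ forces $\be<1$) via the explicit exponential asymptotics rather than the power-type ones.
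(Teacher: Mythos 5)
Your proposal is correct and follows essentially the same route as the paper: it takes the Laplace transform pair (\ref{psipair2}), bounds $sG(s)=s^{\be}\tau^{\al-1}E_{\al,\al}(-s^\be\tau^\al)$ on the sector $|\arg s|\le\pi/2+\theta$ via the Mittag-Leffler estimate (\ref{MLest}) (yielding $|sG(s)|\le C\tau^{-1}$), and then invokes the implication (i) $\Rightarrow$ (ii) of Pr\"uss' Theorem~0.1. Your extra bookkeeping on $\arg(-s^\be\tau^\al)$ and the admissible range of $\mu$ in (\ref{MLest}) simply makes explicit what the paper leaves implicit.
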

\begin{proof}
Let us take the Laplace transform pair (\ref{psipair2}): $G(s)=s^{\be-1}\tau^{\al-1}E_{\al,\al}(-s^\be \tau^\al)$, and $g(t)=\psi_{\al,\be}(t,\tau)$, where $\tau>0$ is considered as a parameter. The function $G(s)$ admits analytic extension to $\CC$ cut along the negative real axis. According to the estimate (\ref{MLest}) for the Mittag-Leffler function
$$
|sG(s)|\le C\tau^{-1}\frac{\tau^\al|s|^\be }{1+\tau^\al|s|^\be}< C\tau^{-1},
$$
for all $s\in\CC$ such that 
$$
|\arg(s)|\le \min\left\{\frac{(2-\al)\pi}{2\be}-\e,\pi\right\}.
$$
To obtain the desired statement it remains to apply implication (i) $\Rightarrow$ (ii).
\end{proof}

\begin{definition} A solution operator $S(t)$ is said to be a bounded analytic solution operator of analyticity type $\theta_0\in (0,\pi/2]$ if $S(t)$ admits an analytic extension to the sector $|\arg t|<\theta_0$,  which is  bounded on each sector $|\arg t|\le\theta<\theta_0$, $\theta<\theta_0$.
\end{definition}

Proposition~\ref{p3} together with subordination formula (\ref{sub}) implies that $S_{\al,\be}(t)$ is a bounded analytic solution operator according to the above definition.
 The proof is similar to the one in \cite{Baj}, Theorem~3.3, where, based on analogous property for the function $\ph_\be(t,\tau)$, analyticity of the subordinated solution operator for the time-fractional evolution equation is established. 

Taking into account relations (\ref{pc}) we can derive the corresponding sectors of existence of bounded analytic extensions for the functions $f_\al(t,\tau)$ and $\ph_\be(t,\tau)$ (setting in (\ref{theta}) $\be=1$ or $\al=1$). 
In this way known results for analyticity of the semigroup $S_{\al,1}(t)$ \cite{Yosida, MiaoLi} and of the solution operator $S_{1,\be}(t)$ \cite{Baj, MiaoLi} are recovered.
The analyticity of the solution operator $S_{\al,\be}(t)$ is discussed in \cite{MiaoLi}.

\section{Relation to L\'evy extremal stable density and Mainardi function }

In this section some representation formulae for the subordination kernel $\psi_{\al,\be}(t,\tau)$ are derived in terms of  the L\'evy extremal stable density and the Mainardi function. 

We start with scaling laws for the subordination kernels. 
From the definitions (\ref{fdef}), (\ref{phidef}) and (\ref{psidef}) of the subordination kernels $f_\al,\ \ph_\be$ and $\psi_{\al,\be}$ we derive the following self-similarity properties
\begin{eqnarray}
&&f_\al(t,\tau)=t^{-1/\al}f_\al(1,\tau t^{-1/\al}),\ \ \ph_\be(t,\tau)=t^{-\be}\ph_\be(1,\tau t^{-\be}),\label{aaa}\\
&&\psi_{\al,\be}(t,\tau)=t^{-\be/\al}\psi_{\al,\be}(1,\tau t^{-\be/\al}).\label{bbb}
\end{eqnarray}
Introducing the functions of one variable $L_\al$, $M_\be$ and $K_{\al,\be}$ as follows
\begin{equation}\label{000}
L_\al(r)= f_\al(1,r),\ M_\be(r)=\ph_\be(1,r), \ K_{\al,\be}(r)=\psi_{\al,\be}(1,r),
\end{equation}
we deduce from (\ref{aaa}) and (\ref{bbb}) the following representations for the subordination kernels
\begin{eqnarray}
&&f_\al(t,\tau)=t^{-1/\al}L_\al(\tau t^{-1/\al}),\ \ \ph_\be(t,\tau)=t^{-\be}M_\be(\tau t^{-\be}),\label{fphi}\\
&&\psi_{\al,\be}(t,\tau)=t^{-\be/\al}K_{\al,\be}(\tau t^{-\be/\al}).\label{psi}
\end{eqnarray}
In addition, identities (\ref{pc00}) and (\ref{pc}) imply for the new functions 
\begin{equation}\label{pc1}
L_\al(r)=K_{\al,1}(r),\  M_\be(r)=K_{1,\be}(r),\ \ L_1(r)=M_1(r)=K_{1,1}(r)=\de(r-1).
\end{equation}

From (\ref{000}) and (\ref{subscalar}) we deduce:
\begin{equation}\label{LLK}
\LL\{K_{\al,\be}(r); r\to \la\}=E_\be(-\la^\al),
\end{equation}
i.e. $K_{\al,\be}(r)$ can be defined as the inverse Laplace transform of the Mittag-Leffler function $E_\be(-\la^\al)$.
The following Laplace transform pairs for the functions $L_\al(r)$ and $M_\be(r)$ are derived from (\ref{LLK}), taking $\be=1$ and $\al=1$, respectively:
\begin{equation}\label{LL}
\LL\{L_\al(r); r\to \la\}=\exp({-\la^\al}),\ \ \LL\{M_\be(r); r\to \la\}=E_\be(-\la).
\end{equation}
Therefore, we recognize from (\ref{LL}) the L\'evy extremal stable density $L_\al(r)$ and the Mainardi function $M_\be(r)$  (see e.g. \cite{spacetimepdf}, where these two functions appear in the context of the one-dimensional space-time fractional diffusion-wave equation). 

In the next theorem we derive representations for the subordination kernel 
in terms of $L_\al(r)$ and $M_\be(r)$.


\begin{theorem}\label{p4}
The subordination kernel $\psi_{\al,\be}(t,\tau)$ is given by (\ref{psi}), where the function $K_{\al,\be}(r)$ admits the following representations 
\begin{eqnarray}
&&K_{\al,\be}(r)=\int_0^\infty \si^{-1/\al}L_\al(r \si^{-1/\al})M_\be(\si)\, d\si,\label{1}\\
&&K_{\al,\be}(r)=\int_0^\infty \si^{\be/\al}L_\al(r \si^{\be/\al})L_\be(\si)\, d\si,\label{2}\\
&&K_{\al,\be}(r)=\al r^{\al-1}\int_0^\infty \si M_\al(\si)M_\be(\si r^\al)\, d\si.\label{3}
\end{eqnarray}
Moreover, in the particular case $\al=\be$ it holds
\begin{equation}\label{P}
K_{\al,\al}(r)=
\frac1\pi \frac{r^{\al-1}\sin\al\pi}{r^{2\al}+2r^\al\cos\al\pi+1}
\end{equation}
and if $0<\al\le\be\le 1$ then 
\begin{equation}\label{4}
K_{\al,\be}(r)=\int_0^\infty \si^{-\be/\al} L_{\al/\be}(r\si^{-\be/\al})K_{\be,\be}(\si)\, d\si.
\end{equation}
Here $L_\al(r)$ is the L\'evy extremal stable density and $M_\be(r)$ is the Mainardi function, defined by the Laplace transform pairs (\ref{LL}).
\end{theorem}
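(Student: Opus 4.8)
The plan is to reduce every assertion to the Laplace-transform characterization (\ref{LLK}), namely $\LL\{K_{\al,\be}(r);r\to\la\}=E_\be(-\la^\al)$, and then invoke uniqueness of the Laplace transform; the self-similarity identities (\ref{fphi}), (\ref{psi}) and the convention $L_1(r)=\de(r-1)$ are used throughout. The main computational tool will be a single lemma: for $0<\g\le1$ and a nonnegative function $h$ on $(0,\infty)$ with Laplace transform $\hat h(\la)$, Tonelli's theorem, the substitution $u=r\si^{-1/\g}$ and the defining property $\int_0^\infty e^{-\mu u}L_\g(u)\,du=e^{-\mu^\g}$ yield
\[
\LL\left\{\int_0^\infty \si^{-1/\g}L_\g(r\si^{-1/\g})h(\si)\,d\si;\ r\to\la\right\}=\hat h(\la^\g).
\]
In words, convolution against the scaled L\'evy kernel $L_\g$ corresponds on the Laplace side to the substitution $\la\mapsto\la^\g$.

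First I would prove (\ref{1}): evaluating the defining integral (\ref{psidef}) at $t=1$ and inserting $f_\al(\si,r)=\si^{-1/\al}L_\al(r\si^{-1/\al})$ and $\ph_\be(1,\si)=M_\be(\si)$ gives (\ref{1}) at once, and the lemma with $\g=\al$, $h=M_\be$ (so $\hat h=E_\be(-\la)$ by (\ref{LL})) reconfirms $\LL\{K_{\al,\be}\}=E_\be(-\la^\al)$. Next I would record the Mainardi--L\'evy identity $M_\nu(r)=\frac1\nu\,r^{-1-1/\nu}L_\nu(r^{-1/\nu})$ for $0<\nu<1$; it follows in one line from the contour representations (\ref{fdef}), (\ref{phidef}) by integrating $\frac{d}{dz}e^{zt-\tau z^\be}=(t-\be\tau z^{\be-1})e^{zt-\tau z^\be}$ along the Bromwich path (the exact-derivative term has vanishing endpoint contributions since $\Re(z^\be)\to+\infty$ along $\Re z=\si>0$ when $0<\be<1$), which gives the transposition rule $\ph_\be(t,\tau)=\frac{t}{\be\tau}f_\be(\tau,t)$ and hence the identity at $t=1$ via (\ref{fphi}). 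With this in hand, (\ref{2}) follows from (\ref{1}) by substituting the Mainardi--L\'evy identity for $M_\be(\si)$ and changing variables $\eta=\si^{-1/\be}$, and (\ref{3}) follows from (\ref{2}) (or (\ref{1})) by applying the identity again to turn the L\'evy densities into Mainardi functions, then rescaling $\si\mapsto r^\al\si$ to reach the symmetric form; alternatively each of (\ref{2}), (\ref{3}) can be verified directly by computing its Laplace transform in $r$ through the same substitution and matching $E_\be(-\la^\al)$.

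For (\ref{4}) I would apply the lemma with $\g=\al/\be$ --- here the hypothesis $\al\le\be$ is exactly what is needed, since $L_{\al/\be}$ is a genuine one-sided stable density only for $\al/\be\in(0,1]$ --- and $h=K_{\be,\be}$, whose Laplace transform is $E_\be(-\la^\be)$ by (\ref{LLK}); the lemma then produces Laplace transform $E_\be\bigl(-(\la^{\al/\be})^\be\bigr)=E_\be(-\la^\al)=\LL\{K_{\al,\be}\}$, and uniqueness closes the case. For the closed form (\ref{P}) with $\al=\be$, I would start from (\ref{3}) specialized to $\al=\be$, i.e. $K_{\al,\al}(r)=\al r^{\al-1}\int_0^\infty \si M_\al(\si)M_\al(r^\al\si)\,d\si$, and evaluate this classical integral of a product of Mainardi functions in closed form --- most cleanly via the Mellin transform $\int_0^\infty v^{p-1}M_\al(v)\,dv=\Gamma(p)/\Gamma(1-\al+\al p)$ and a residue computation, which produces the series $\frac{1}{\pi r}\sum_{n\ge1}(-1)^{n+1}\sin(n\al\pi)\,r^{-\al n}$; summing this with the reflection formula and $\sum_{n\ge1}(-1)^{n+1}\sin(n\theta)x^n=\frac{x\sin\theta}{1+2x\cos\theta+x^2}$ gives the rational expression (\ref{P}), whose total mass is checked directly by the substitution $u=r^\al$. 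Equivalently, one may simply verify that the stated rational function has Laplace transform $E_\al(-\la^\al)$.

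I expect (\ref{P}) to be the main obstacle. The naive route --- inverting $E_\al(-\la^\al)$ by deforming the Bromwich line onto a Hankel contour wrapped around the branch cut $(-\infty,0]$ --- breaks down once $\al>2/3$, because $E_\al(-\la^\al)$ then carries an exponentially growing contribution outside the decay sector of the Mittag-Leffler function, so the connecting arcs fail to vanish for small $r$. One must therefore route the computation through the Mainardi-product integral (evaluated by Mellin transforms and standard special-function identities --- equivalently, recognizing it as the density of the ratio of two independent one-sided $\al$-stable variables) or verify the Laplace transform of the explicit rational function head-on by partial fractions. A secondary point needing care is the Mainardi--L\'evy identity underlying (\ref{2}) and (\ref{3}): if it is not quoted from the literature, the integration-by-parts argument on the Bromwich contour and the decay estimate $\Re(z^\be)\to+\infty$ for $0<\be<1$ must be spelled out.
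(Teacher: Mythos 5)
Your proposal is correct, and for representations (\ref{1})--(\ref{3}) and (\ref{4}) it follows essentially the paper's own route: (\ref{1}) by evaluating (\ref{psidef}) at $t=1$ via (\ref{fphi}), (\ref{psi}); (\ref{2}) and (\ref{3}) by the Mainardi--L\'evy identity $M_\al(r)=\al^{-1}r^{-1-1/\al}L_\al(r^{-1/\al})$; and (\ref{4}) by exactly the lemma you state, which is the Berberan--Santos formula (\ref{BS}) applied with exponent $\al/\be$ and $h=K_{\be,\be}$. The differences are twofold. First, you prove the two auxiliary facts rather than quote them: the paper cites Pollard for (\ref{LM}) and verifies (\ref{BS}) by matching Laplace transforms, whereas you derive (\ref{LM}) by integrating the exact derivative $\frac{d}{dz}e^{zt-\tau z^\be}$ along the Bromwich line (your decay estimate $\Re(z^\be)\sim|y|^\be\cos(\be\pi/2)\to+\infty$ is the right justification) and prove the lemma by Tonelli plus the substitution $u=r\si^{-1/\g}$; this makes the argument self-contained at the cost of length. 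Second, and more substantially, your treatment of the closed form (\ref{P}) is genuinely different: the paper obtains (\ref{P}) in one line by combining (\ref{LLK}) with the quoted spectral representation (\ref{GM}) at $\be=1$, $\la=1$, i.e.\ $K_{\al,\al}(r)=P_{\al,1}(r;1)$, whereas you evaluate the Mainardi-product integral in (\ref{3}) by Mellin transform and residues (or, equivalently, identify it as the Lamperti ratio density of two independent one-sided $\al$-stable variables, or verify the Laplace transform of the rational function directly). Your route is heavier but self-contained, and your warning is well taken: naively collapsing the Bromwich contour for $\la\mapsto E_\al(-\la^\al)$ does fail for $\al>2/3$, since $-\la^\al$ then enters the growth sector $|\arg z|<\al\pi/2$ of $E_\al$ before the cut is reached --- which is precisely why both you and the paper route the computation through a representation (the product integral, respectively (\ref{GM})) valid for all $\al\in(0,1)$. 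If you keep the Mellin route, spell out that the residue series converges only for $r>1$ (with the dual series in $r^{\al n}$ for $r<1$) before summing to (\ref{P}); also note that (\ref{P}), like (\ref{GM}), should exclude $\al=\be=1$, where $K_{1,1}(r)=\de(r-1)$.
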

\begin{proof}
Expression (\ref{1}) follows directly from (\ref{psidef}), (\ref{fphi}) and (\ref{psi}). Representations (\ref{2}) and (\ref{3}) can be deduced from (\ref{1}) after applying the formula \cite{Pollard}
\begin{equation}\label{LM}
M_\al (r)=\frac{r^{-1-1/\al}}{\al}L_\al(r^{-1/\al}).
\end{equation}
Representation (\ref{P}) follows directly from (\ref{GM}) and (\ref{LLK}) for $\al=\be$. 
To prove (\ref{4}) we find the inverse Laplace transform of $E_\be(-\la^\al)$ by
using (\ref{GM}) and the following property (see e.g. \cite{BS}): If $I(\la)=\LL\{H(r); r\to \la\}$ and $I(\la^\al)=\LL\{H_\al(r); r\to \la\}$ then 
\begin{equation}\label{BS}
H_\al(r)=
\int_0^\infty \si^{-1/\al}L_\al(r\si^{-1/\al})H(\si)\, d\si,\ \ 0<\al\le 1.
\end{equation}
(In fact, formula (\ref{BS}) can be verified by proving that Laplace transforms of both sides are equal.)
From (\ref{BS}) and (\ref{LLK}) it follows for $0<\al\le\be\le 1$
$$
K_{\al,\be}(r)=\LL^{-1}\{E_\be(-(\la^{\al/\be})^\be; \la\to r\}=\int_0^\infty \si^{-\be/\al} L_{\al/\be}(r\si^{-\be/\al})K_{\be,\be}(\si)\, d\si
$$
and the last identity is proved.
\end{proof}

Next the regularity of the function $K_{\al,\be}(r)$ is discussed briefly. We start with the asymptotic identity implied by (\ref{MLas})
\begin{equation}\label{Eb}
E_\be(-\la^\al)\sim \frac {\la^{-\al}}{\Gamma(1-\be)},\ \ \la\to +\infty,
\end{equation}
which, by (\ref{LLK}) and applying Tauberian theorem, is equivalent to
\begin{equation}\label{Ksim}
K_{\al,\be}(r)\sim \frac{r^{\al-1}}{\Gamma(\al)\Gamma(1-\be)},\ \ r\to 0+.
\end{equation}
Therefore, if $\al<1$ and $\be<1$, then $K_{\al,\be}(r)$ has a singularity at the origin: $K_{\al,\be}(r)\to +\infty$ as $r\to 0+$.
  This is in contrast with the regular behaviour of any of the functions $L_\al(r)=K_{\al,1}(r)$ and $M_\be(r)=K_{1,\be}(r)$, which can be seen again from (\ref{Ksim}), taking $\be=1$ or $\al=1$ and noting that $\Gamma(0)=\infty$.
	
	Moreover, the Laplace transforms (\ref{LL}) of $L_\al(r)$ and $M_\be(r)$ satisfy (ii), which means that these functions admit bounded analytic extensions to appropriate sectors of the complex plane. In contrast, if $\al<1$ and $\be<1$, then (\ref{Eb}) implies that $\la E_\be(-\la^\al)\to +\infty$ as $\la\to +\infty$, therefore (ii) is not satisfied, thus, $K_{\al,\be}(r)$ does not admit a bounded analytic extension to any sector of the complex plane. 
	
	In contrast to the singular behaviour of $K_{\al,\be}(r)$ for $0<\al,\be<1$, the related kernel $\psi_{\al,\be}(t,\tau)$ exhibits a regular behaviour in $t$ (see Proposition~\ref{p3}). This can be seen in the case $\al=\be$, in which there exist simple explicit expressions for these functions: formula (\ref{P}), which together with  (\ref{psi}) yields
	\begin{equation}\label{aa}
	\psi_{\al,\al}(t,\tau)=t^{-1}K_{\al,\al}(\tau t^{-1})=\frac1\pi \frac{t^\al\tau^{\al-1}\sin\al\pi}{t^{2\al}+2t^\al \tau^\al\cos\al\pi+\tau^{2\al}}.
	\end{equation}

	The next theorem summarizes the main results obtained so far. 

\begin{theorem}\label{thsub}
Let $A$ be a generator of a bounded $C_0$-semigroup $S_{1,1}(t)$. 
Then
 problem (\ref{ab}) admits a bounded analytic solution operator $S_{\al,\be}(t)$ of analyticity type at least $\theta_0$, where $\theta_0$ is defined in (\ref{theta}). 
The solution operator $S_{\al,\be}(t)$ is related to $S_{1,1}(t)$ via the subordination identity 
\begin{equation}\label{sub11}
S_{\al,\be}(t)=\int_0^\infty \psi_{\al,\be}(t,\tau) S_{1,1}(\tau)\,d \tau,\ \ t>0,
\end{equation}
 where $\psi_{\al,\be}(t,\tau)=t^{-\be/\al}K_{\al,\be}(\tau t^{-\be/\al})$, with $K_{\al,\be}(r)$ being a unilateral probability density function 
  defined as the inverse Laplace transform of a Mittag-Leffler function: 
\begin{equation}\label{LK}
E_\be(-\la^\al)=\int_0^\infty e^{-\la r} K_{\al,\be}(r)\, dr.
\end{equation}
Moreover, $K_{\al,\be}(r)$ admits the representations (\ref{1})-(\ref{4}), where $M_\al(r)$ is the Mainardi function  and $L_\al(r)$ is the L\'evy one-sided extremal stable density, defined by the Laplace transform pairs (\ref{LL}).
\end{theorem}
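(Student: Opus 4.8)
The plan is to read the statement off the results of Sections~2 and~3, supplying the analytic-continuation argument that ties them together. The subordination identity (\ref{sub11}) and the representation (\ref{psidef}) of $\psi_{\al,\be}(t,\tau)$ were obtained at the beginning of Section~2 by composing the classical subordination formulae (\ref{suba}) and (\ref{subab}); the self-similar form $\psi_{\al,\be}(t,\tau)=t^{-\be/\al}K_{\al,\be}(\tau t^{-\be/\al})$ is (\ref{psi}), a consequence of the scaling law (\ref{bbb}) and the definition (\ref{000}); the statement that $K_{\al,\be}(r)=\psi_{\al,\be}(1,r)$ is a unilateral probability density function is Proposition~\ref{p1} specialised to $t=1$; the Laplace representation (\ref{LK}) is identity (\ref{LLK}), that is, the scalar subordination identity (\ref{subscalar}) at $t=1$; and the representations (\ref{1})--(\ref{4}) are exactly Theorem~\ref{p4}. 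So the only thing that still needs an argument is that $S_{\al,\be}(t)$ is a bounded analytic solution operator of analyticity type at least $\theta_0$; when $\al=\be=1$ we have $\theta_0=0$ and there is nothing to prove, so assume henceforth $\al+\be<2$, so that $\theta_0>0$.

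For this I would argue as in \cite{Baj}, Theorem~3.3, with $\ph_\be$ replaced by $\psi_{\al,\be}$ and the time-fractional analyticity statement there replaced by Proposition~\ref{p3}. Proposition~\ref{p3} gives, for each fixed $\tau>0$, an analytic extension of $\psi_{\al,\be}(\cdot,\tau)$ to $|\arg t|<\theta_0$ which is bounded on every subsector. Taking $\tau=1$ in the self-similar form (\ref{psi}) and writing $K_{\al,\be}(r)=r^{-1}\psi_{\al,\be}(r^{-\al/\be},1)$, a composition of holomorphic maps between the corresponding sectors, one upgrades this to holomorphy of $K_{\al,\be}$ on the sector $|\arg r|<(\be/\al)\theta_0$. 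To push the analyticity through the operator integral (\ref{sub11}) one needs, since $\|S_{1,1}(\tau)\|\le M$ but $S_{1,1}$ is not integrable, a bound $\sup_{|\arg t|\le\theta}\int_0^\infty|\psi_{\al,\be}(t,\tau)|\,d\tau<\infty$ for each $\theta<\theta_0$. I would obtain it via the substitution $r=\tau t^{-\be/\al}$, which turns the integral into that of $|K_{\al,\be}|$ along a ray through the origin lying in the sector $|\arg r|<(\be/\al)\theta$; this is a legitimate contour rotation because $K_{\al,\be}$ is holomorphic there and decays along its rays (from the estimate (\ref{MLest}) applied to the inversion contour for (\ref{LK})), and the resulting integral is finite and locally uniformly bounded because $K_{\al,\be}$ has the integrable singularity $r^{\al-1}$ at the origin, cf.\ (\ref{Ksim}), and an integrable tail at infinity. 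With this bound in hand, differentiation under the integral sign (or Morera's theorem) makes $t\mapsto\int_0^\infty\psi_{\al,\be}(t,\tau)S_{1,1}(\tau)\,d\tau$ analytic on $|\arg t|<\theta_0$ and bounded on each subsector; on $(0,\infty)$ it equals $S_{\al,\be}(t)$ by (\ref{sub11}), so by the identity theorem $S_{\al,\be}(t)$ inherits the extension, and it remains a solution operator of (\ref{ab}) on the sector by the uniqueness-of-continuation part of the argument in \cite{Baj}, Theorem~3.3.

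The only genuinely non-bookkeeping step is the uniform-in-$t$ estimate for $\int_0^\infty|\psi_{\al,\be}(t,\tau)|\,d\tau$ over subsectors, together with the justification of the contour rotation underlying it; the holomorphy of $K_{\al,\be}$ off the negative real axis and its decay along rays, both used there, follow from Proposition~\ref{p3} and (\ref{MLest}) but must be stated with some care. Everything else is obtained by quoting Section~2, Proposition~\ref{p1}, Proposition~\ref{p3}, and Theorem~\ref{p4}.
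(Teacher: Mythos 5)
Your proposal is correct and takes essentially the same route as the paper: Theorem~\ref{thsub} is presented there as a summary, assembled exactly as you do from the subordination formula (\ref{sub}), the scaling relation (\ref{psi}), Proposition~\ref{p1}, the Laplace pair (\ref{LLK}), and Theorem~\ref{p4}, with the bounded analytic extension of $S_{\al,\be}(t)$ deduced from Proposition~\ref{p3} ``as in \cite{Baj}, Theorem~3.3.'' Your additional work --- the uniform bound on $\int_0^\infty|\psi_{\al,\be}(t,\tau)|\,d\tau$ over subsectors via the substitution $r=\tau t^{-\be/\al}$ and the sectorial holomorphy of $K_{\al,\be}$ --- simply fleshes out the step the paper delegates to \cite{Baj}, and is consistent with it.
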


\begin{corollary}\label{cor}
Let $0<\al\le\be<1$.
Under the conditions of Theorem~\ref{thsub} the solution operator $S_{\al,\be}(t)$ admits the representation
\begin{equation}\label{subbebe}
S_{\al,\be}(t)=\int_0^\infty \psi_{\be,\be}(t,\tau) S_{\al/\be,1}(\tau)\,d \tau,\ \ t>0,
\end{equation}
where $S_{\al/\be,1}(t)$ is the $C_0$-semigroup generated by the operator $-A^{\al/\be}$ and the function $\psi_{\be,\be}(t,\tau)$ is defined in (\ref{aa}). 
\end{corollary}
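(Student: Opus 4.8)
The plan is to reduce the claimed identity (\ref{subbebe}) to the already-established subordination formula (\ref{sub}) together with the kernel representation (\ref{4}). Since $0<\al\le\be<1$ we have $0<\al/\be\le1$, so $-A^{\al/\be}$ in the sense of (\ref{Aal}) generates a bounded $C_0$-semigroup $S_{\al/\be,1}(t)$; applying the classical subordination identity (\ref{suba}) with $\al$ replaced by $\al/\be$ gives
\begin{equation*}
S_{\al/\be,1}(\tau)=\int_0^\infty f_{\al/\be}(\tau,\rho)\,S_{1,1}(\rho)\,d\rho,\qquad \tau>0.
\end{equation*}
Substituting this into the right-hand side of (\ref{subbebe}) and interchanging the order of integration — legitimate since $\psi_{\be,\be}\ge0$, $f_{\al/\be}\ge0$ and $\|S_{1,1}(\rho)\|\le M$ — yields
\begin{equation*}
\int_0^\infty\psi_{\be,\be}(t,\tau)\,S_{\al/\be,1}(\tau)\,d\tau
=\int_0^\infty\left(\int_0^\infty\psi_{\be,\be}(t,\tau)\,f_{\al/\be}(\tau,\rho)\,d\tau\right)S_{1,1}(\rho)\,d\rho .
\end{equation*}

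The core of the argument is then the scalar kernel identity
\begin{equation*}
\int_0^\infty\psi_{\be,\be}(t,\tau)\,f_{\al/\be}(\tau,\rho)\,d\tau=\psi_{\al,\be}(t,\rho),\qquad t,\rho>0,
\end{equation*}
which I would verify by taking the Laplace transform in $\rho$. By (\ref{subas}) with exponent $\al/\be$ one has $\int_0^\infty f_{\al/\be}(\tau,\rho)e^{-\la\rho}\,d\rho=e^{-\tau\la^{\al/\be}}$, so after Fubini the transform of the left-hand side equals $\int_0^\infty\psi_{\be,\be}(t,\tau)e^{-\tau\la^{\al/\be}}\,d\tau$, which by (\ref{subscalar}) taken with $\al=\be$ and $\la$ replaced by $\la^{\al/\be}$ equals $E_\be(-\la^\al t^\be)$; and $E_\be(-\la^\al t^\be)=\LL\{\psi_{\al,\be}(t,\cdot);\la\}$ is precisely (\ref{subscalar}). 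Uniqueness of the Laplace transform then gives the kernel identity. (Equivalently, inserting the self-similarity relations (\ref{fphi}) and (\ref{psi}), one sees that this is exactly the rescaled form of (\ref{4}).)

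Combining the two displays above,
\begin{equation*}
\int_0^\infty\psi_{\be,\be}(t,\tau)\,S_{\al/\be,1}(\tau)\,d\tau=\int_0^\infty\psi_{\al,\be}(t,\rho)\,S_{1,1}(\rho)\,d\rho=S_{\al,\be}(t),
\end{equation*}
the last equality being (\ref{sub}), which is the assertion. The only delicate point is the justification of Fubini's theorem for the operator-valued integrals, but this is routine here: all scalar kernels involved are nonnegative — indeed probability densities by Proposition~\ref{p1} — and $S_{1,1}$ is uniformly bounded, so no genuine obstacle arises. The endpoint $\al=\be$ is likewise harmless, since then $\al/\be=1$, $f_1(\tau,\rho)=\de(\tau-\rho)$ by (\ref{pc00}), and (\ref{subbebe}) collapses directly to (\ref{sub}).
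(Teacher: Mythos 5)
Your proof is correct and follows essentially the same route as the paper: both arguments reduce (\ref{subbebe}) via (\ref{suba}) and Fubini's theorem to the kernel composition identity $\psi_{\al,\be}(t,\rho)=\int_0^\infty\psi_{\be,\be}(t,\tau)f_{\al/\be}(\tau,\rho)\,d\tau$. The only (minor) difference is that the paper obtains this identity by combining (\ref{psidef}) with the factorization $f_{\al}=f_\be\circ f_{\al/\be}$ from (\ref{ffab}), whereas you verify it directly by Laplace-transform uniqueness (equivalently via (\ref{4})), which is the same mechanism used to prove (\ref{ffab}) in the first place.
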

\begin{proof}
This subordination identity is derived by plugging in (\ref{sub11}) the identity
$$
f_{\al}(t,\tau)=\int_{0}^{\infty} f_\be(t,\si) f_{\al/\be}(\si,\tau)\,d\si
$$
following from (\ref{ffab}), and applying (\ref{psidef}), Fubini's theorem, and (\ref{suba}).
\end{proof}

 The considerations in this section reveal a relation between the subordination kernel $\psi_{\al,\be}(t,\tau)$ and the fundamental solution of the one-dimensional space-time fractional diffusion-wave equation
\begin{equation}\label{sk}
\DD_t^\be u(x,t)=D_{x}^{\al,\theta}u(x,t),\ \ x\in\RR, t>0,
\end{equation}
where $D_{x}^{\al,\theta}$ is the Riesz-Feller space-fractional derivative of order $\al$ and skewness $\theta$, see e.g. \cite{spacetimepdf, MainardiPagniniGorenflo, Pagnini}.
Indeed, a comparison to the representations of the fundamental solution to problem (\ref{sk}) in terms of Mainardi function and L\'evy extremal stable density, obtained in \cite{spacetimepdf}, or subordination formula (56) in \cite{Pagnini}, imply the following relation
\begin{equation*}
\psi_{\al,\be}(t,\tau)=2\left.\GG_{\al,\be}^{-\al}(x,t)\right|_{x=\tau},
\end{equation*}
where $\GG_{\al,\be}^{-\al}(x,t)$ is the Green function (restricted to the $x$-range $x>0$) of the space-time fractional equation (\ref{sk}) with Caputo time-derivative of order $\be$ and Riesz-Feller space-derivative of order $\al$  and skewness $-\al$.

\section{Applications}

\subsection{Space-time fractional diffusion equation on $\RR^n$}

 The first example concerns  the space-time fractional diffusion equation (\ref{st}) on $\RR^n$, studied in a series of papers, e.g. \cite{spacetimepdf, Luchko, LuchkoMMNP, LuchkoAxioms, Luchkosub}. Subordination formulae for this equation have already been derived by the use of Mellin transform, see e.g. \cite{MainardiPagniniGorenflo, Pagnini} for $n=1$ and \cite{Luchkosub} for the multi-dimensional equation.   

Assume $X$ is one of the spaces $L^p(\RR^n)$, $1< p<\infty$, or the space $C_0(\RR^n)$ of continuous functions vanishing at infinity  and let $A$ be the negative Laplace operator defined on $X$ with maximal domain. For details on the definition of the full-space Laplace operator we refer to \cite{Martinez}. 
The operator $A$ is a generator of a bounded analytic $C_0$-semigroup $S_{1,1}(t)$ with corresponding Green function
  \cite{Laplace}
	\begin{equation}\label{Gaus}
	G_{1,1,n}(x,t)=\frac{1}{(4\pi t)^{n/2}}e^{-|x|^2/4t},\ \ x\in\RR^n,\ t>0.
	\end{equation}

For any $0<\al\le 1$ and $0<\be\le 1$ the solution operator $S_{\al,\be}(t)$ of problem (\ref{ab}) is given by
	$$
	(S_{\al,\be}(t)v)(x)=\int_{\RR^n} G_{\al,\be,n}(y,t) v(x-y)\, dy,\ \ v\in X,\ \ t>0,x\in\RR^n,
	$$
	where $G_{\al,\be,n}(x,t)$ is the corresponding Green function. 
	Therefore, the subordination formula (\ref{sub}) can be written in terms of Green functions as follows
	\begin{equation}\label{Gab}
	G_{\al,\be,n}(x,t)=\int_0^\infty \psi_{\al,\be}(t,\tau)G_{1,1,n}(x,\tau)\, d\tau.
	\end{equation}
	Based on (\ref{Gab}) and (\ref{Gaus}), we find closed form expressions for $G_{\al,\be,n}(x,t)$ for $\al=\be$. In this case Eq. (\ref{st}) is the so-called $\al$-fractional diffusion equation studied in \cite{LuchkoAxioms, LuchkoCAMWA}. 
	Taking into account (\ref{aa}), the subordination formula (\ref{Gab}) reads
	\begin{equation}\label{Gaa}
	G_{\al,\al,n}(x,t)=\frac{1}{\pi}\int_0^\infty  \frac{t^\al\tau^{\al-1}\sin\al\pi}{t^{2\al}+2t^\al \tau^\al\cos\al\pi+\tau^{2\al}}G_{1,1,n}(x,\tau)\, d\tau.
	\end{equation}
	
	For $x=0$ the integral in (\ref{Gaa}) is convergent  only if $\al>n/2$. Therefore, $G_{\al,\al,n}(0,t)$ is finite only for $n=1$ and $\al>1/2$. The same conclusion can be found in  \cite{LuchkoCAMWA, LuchkoAxioms}. 
		
		Next, applying the subordination formula (\ref{Gaa}), we derive a closed-form expression for the two-dimensional Green function.  Plugging (\ref{Gaus}) with $n=2$ in (\ref{Gaa}) yields
\begin{eqnarray}
G_{\al,\al,2}(x,t)
&=&\frac{t^\al}{4\pi^2} \int_0^\infty\frac{\tau^{\al-2}\sin\al\pi}{t^{2\al}+2t^\al \tau^\al\cos\al\pi+\tau^{2\al}}e^{-{|x|^2}/{4\tau}}\, d\tau\nonumber\\
&=&\frac{1}{4\pi t} \int_0^\infty\frac{\si^{\al}\sin\al\pi}{\si^{2\al}+2\si^\al \cos\al\pi+1}e^{-(|x|^2/4t)\si}\, d\si,\nonumber
\end{eqnarray}
where we have made the change of variables $\si=t/\tau$. Formula (\ref{GM}) gives the following expression  in terms of Mittag-Leffler functions
\begin{equation}\label{2D}
G_{\al,\al,2}(x,t)=\frac{1}{4\pi t}(|x|^2/4t)^{\al-1}E_{\al,\al}(- (|x|^2/4t)^\al).
\end{equation}
The same result can be found in \cite{LuchkoMMNP, LuchkoCAMWA, Luchkosub}. 

	Further, let us restrict our attention to the special case $\al=\be=1/2$. We will derive some results, which seem to be new. 
			 Plugging (\ref{psipsi}) in the subordination formula (\ref{Gab}) yields
	\begin{equation}\label{Gaaa}
	G_{1/2,1/2,n}(x,t)=\frac{\sqrt{t}}{\pi}\int_0^\infty  \frac{1}{\sqrt{\tau}(t+\tau)}G_{1,1,n}(x,\tau)\, d\tau.
	\end{equation}
	Inserting (\ref{Gaus}) in (\ref{Gaaa}) and introducing a new integration variable $\si=t/\tau$ gives
\begin{equation}\label{GU}
G_{1/2,1/2,n}(x,t)=\frac{\Gamma\left(\frac{n+1}{2}\right)}{2^n \pi^{n/2+1}t^{n/2}}U\left(\frac{n+1}{2},\frac{n+1}{2},\frac{|x|^2}{4t}\right),
\end{equation}
where $U$ is the Tricomi's confluent hypergeometric function (\ref{tricomi}). 

We first check that  formula (\ref{GU}) with $n=2$ is the same as (\ref{2D}) with $\al=1/2$ due to relation (\ref{MLT}).

Applying (\ref{relation}), expression (\ref{GU}) for the Green function can be rewritten in terms of the incomplete Gamma function (\ref{Gamma}) as follows
\begin{equation}\label{GG}
G_{1/2,1/2,n}(x,t)=\frac{\Gamma\left(\frac{n+1}{2}\right)}{2^n \pi^{n/2+1}t^{n/2}}e^{{|x|^2}/{4t}}\Gamma\left(\frac{1-n}{2},\frac{|x|^2}{4t}\right).
\end{equation}
In the one-dimensional case representations (\ref{GU}) and (\ref{GG}) reduce to
\begin{equation}\label{GG1}
G_{1/2,1/2,1}(x,t)=\frac{1}{2 \pi^{3/2}\sqrt{t}}e^{{x^2}/{4t}}\EEE_1\left(\frac{x^2}{4t}\right),
\end{equation}
where $\EEE_1(z)$ is the exponential integral (\ref{ei}). The asymptotic expansion (\ref{Elargez}) implies for ${x^2}/{4t}\to\infty$ (i.e. as $x\to \infty$ and $t>0$ is fixed or $t\to 0$ and $x\neq 0$ is fixed)
$$
G_{1/2,1/2,1}(x,t)\approx \frac{2\sqrt{t}}{\pi^{3/2}x^2},\ \ {x^2}/{4t}\to\infty.
$$
Similar asymptotic is observed for $\al>1/2$, see \cite{LuchkoAxioms}, Eq. 25. 

On the other hand, the expansion (\ref{Esmallz}) of the exponential integral gives for $x\in\RR, x\neq 0$ and $t>0$
$$
G_{1/2,1/2,1}(x,t)= \frac{e^{{x^2}/{4t}}}{2 \pi^{3/2}\sqrt{t}}\left(-\g-\ln \left(\frac{x^2}{4t}\right)-\sum_{k=1}^\infty \frac{\left({x^2}/{4t}\right)^k}{k (k!)}\right).
$$
This expansion implies the following asymptotic behaviour
$$
G_{1/2,1/2,1}(x,t)\approx \frac{\ln 4t-\ln x^2}{2 \pi^{3/2}\sqrt{t}},\ \ {x^2}/{4t}\to 0.
$$
Therefore, $G_{1/2,1/2,1}(x,t)\to\infty$ with logarithmic growth as $x\to 0$ for any fixed $t>0$ and $G_{1/2,1/2,1}(x,t)\to 0$ as $t\to \infty$ and $x\neq 0$ is fixed.
Let us note that when $\al>1/2$ the asymptotic behaviour of $G_{\al,\al,1}(x,t)$ as $x\to 0$ is of a power law type (see \cite{LuchkoAxioms}, Eq. 24), which is in contrast to the observed here logarithmic growth for $\al=1/2$. 

In addition,  inequalities (\ref{br}) imply that the Green function $G_{1/2,1/2,1}(x,t)$ is bracketed for any $x\in\RR, x\neq 0$, and $t>0$ as follows 
$$
\frac{1}{4 \pi^{3/2}\sqrt{t}} \ln \left( 1+\frac{8t}{x^2}\right) < G_{1/2,1/2,1}(x,t)< \frac{1}{2 \pi^{3/2}\sqrt{t}}\ln \left( 1+\frac{4t}{x^2}\right).
$$

Expressions for the multi-dimensional Green function $G_{1/2,1/2,n}(x,t)$ in terms of the exponential integral $\EEE_1$ (for odd dimensions) or the Mittag-Leffler function $E_{1/2,1/2}$ (for even dimensions) can be obtained from (\ref{GG1}) and (\ref{2D}) by applying representation (\ref{GG}) and the recurrence relation (\ref{rec}) between the incomplete Gamma functions.   For example, in this way we derive from (\ref{GG}), (\ref{GG1}), and (\ref{rec}),  the following expression for the three-dimensional Green function
$$
G_{1/2,1/2,3}(x,t)=
\frac{1}{2 \pi^{5/2}\sqrt{t}|x|^2}-\frac{1}{8\pi^{5/2} t^{3/2}}e^{{|x|^2}/{4t}}\EEE_1\left(\frac{|x|^2}{4t}\right).
$$

Finally, we present an application of the subordination formula (\ref{subbebe}), which for $\be=2\al$ gives in terms of Green functions 
\begin{equation}\label{sss}
G_{\al/2,\al,n}(x,t)=\frac{1}{\pi}\int_0^\infty  \frac{t^\al\tau^{\al-1}\sin\al\pi}{t^{2\al}+2t^\al \tau^\al\cos\al\pi+\tau^{2\al}}G_{1/2,1,n}(x,\tau)\,d \tau, 
\end{equation}
where $G_{1/2,1,n}(x,t)$ is the multi-dimensional Poisson kernel \cite{Laplace}
\begin{equation}\label{Poisson}
	G_{1/2,1,n}(x,t)=\frac{\Gamma\left(\frac{n+1}{2}\right)t}{\pi^{(n+1)/2}(t^2+|x|^2)^{(n+1)/2}}.
	\end{equation}
	For example, applying (\ref{sss}) for $n=1$, we can recover the following known closed-form expression (see e.g. \cite{spacetimepdf}, Eq. (4.38), \cite{Pagnini}, Eq. (20))
	\begin{equation}\label{neutral}
	G_{\al/2,\al,1}(x,t)=\frac{1}{\pi}\frac{t^\al x^{\al-1}\sin(\al\pi/2)}{t^{2\al}+2t^\al x^\al\cos(\al\pi/2)+\tau^{2\al}}, \ \ x>0, t>0.
	\end{equation}
	Indeed, starting from the following integral obtained from (\ref{sss}) and (\ref{Poisson})
	\begin{equation}\label{int}
	G_{\al/2,\al,1}(x,t)=\frac{1}{\pi^2}\int_0^\infty  \frac{t^\al\tau^{\al-1}\sin\al\pi}{t^{2\al}+2t^\al \tau^\al\cos\al\pi+\tau^{2\al}}\frac{\tau}{\tau^2+x^2}\,d \tau, 
	\end{equation}
	inserting in (\ref{int}) the identity
	$$
	\frac{1}{\tau^2+x^2}=\frac{1}{x}\int_0^\infty e^{-\tau\si} \sin x\si\, d\si
	$$
	and changing the order of integration we obtain by the use of (\ref{GM})
	\begin{equation}\label{intint}
	G_{\al/2,\al,1}(x,t)=\frac{t^\al}{\pi x}\int_0^\infty  \si^{\al-1}E_{\al,\al}(-t^\al \si^\al)\sin x\si\, d\si.
	\end{equation}
	Note that the integral in (\ref{intint}) is convergent due to the asymptotic expansion (\ref{MLas}) of the Mittag-Leffler function.
	Calculation of (\ref{intint}) by using the Laplace transform pair (\ref{pairs}) yields (\ref{neutral}).

\subsection{Space-time fractional diffusion equation on a bounded domain}

 As a second example we consider the space-time fractional diffusion equation on a bounded domain with Dirichlet boundary conditions. 

Assume $\Om\subset\RR^n$ is an open set and let $X=L^2(\Om)$. Let $A=-\De$, where $\De$ is the Dirichlet Laplacian, i.e. $D(A)= H_0^1(\Omega) \cap H^2(\Omega)$. It is known that the operator $-A$ generates a bounded analytic $C_0$-semigroup $S_{1,1}(t)$ (see e.g \cite{Laplace}),
which admits the following eigenfunction decomposition
\begin{equation}\label{opTTT}
S_{1,1}(t)v=\sum_{j=1}^\infty e^{-{\la_j} t} (v,\phi_j) \phi_j.
\end{equation} 
Here $(.,.)$ denotes the $L^2(\Om)$ inner product and $\phi_j$ is the $L^2(\Om)$-orthonormal basis of eigenfunctions of $-\De$ with eigenvalues $\la_j$.

Applying 
the subordination identity (\ref{sub}) we obtain by using the scalar formula  (\ref{subscalar})
the eigenfunction expansion of the solution of problem (\ref{ab0}) 
\begin{equation}\label{FS}
S_{\al,\be}(t)v=\sum_{n=1}^\infty E_\be(-\la^\al_j t^\be)(v,\phi_j) \phi_j.
\end{equation}
The same result can be obtained by direct application of eigenfunction expansion to solve the corresponding PDE, by taking into account the spectral definition (\ref{fl}) of the fractional Laplacian.

\subsection{Space-time fractional advection equation}

As a third example we consider a fractional-order generalization of the classical one-dimensional advection equation 
\begin{eqnarray}
&&\frac{\partial u}{\partial t}+\frac{\partial u}{\partial x}=0,\ \ x>0,\ t>0,\label{adveq}\\
&&u(x,0)=v(x),\ \ u(0,t)=0.\nonumber
\end{eqnarray}
Let $X=L^p(I)$,  $1\le p<\infty$, or $X=C(I)$, 
where $I=(0,T)$. Assume $A=d/dx$ with domain $D(A)$ consisting of all functions $v\in X$, such that $v'(x)\in X$ and $v(0)=0$.
The operator $A$ generates a bounded $C_0$-semigroup $S_{1,1}(t)$, defined by the solution of problem (\ref{adveq})  
\begin{equation}\label{sol0}
(S_{1,1}(t)v)(x)=H(x-t)v(x-t),
\end{equation}
where $H(\cdot)$ denotes the Heaviside unit-step function.

The classical advection equation (\ref{adveq}) is generalized as follows. As in the previous examples we take a fractional Caputo time-derivative $\DD_t^\be$, $0<\be\le 1$. In space we replace the first derivative by  a fractional Riemann-Liouville derivative $D_x^\al$, $0<\al\le 1$. Motivation for this is the fact that the Riemann-Liouville differential operator 
with domain
$
\{v\in X|\ \ D_x^\al v\in X,\ \ (J_x^{1-\al}v)(0)=0\}
$
can be considered as a fractional power of the operator $A$ defined above (see \cite{Baj}, Lemma~1.8., and \cite{Ashyralyev}). 
For the definitions of the fractional-order integro-differential operators $\DD_t^\be, D_x^\al, J_x^{\al}$ see Appendix. 

Therefore, we consider the following space-time fractional advection equation 
\begin{eqnarray}
&&\DD_t^\be u+D_x^\al u=0,\ \ x>0,\ t>0,\ 0<\al\le 1,\ 0<\be\le 1,\label{fradveq}\\
&&u(x,0)=v(x),\ \ J_x^{1-\al}u|_{x=0}=0.\nonumber
\end{eqnarray}
Applying subordination identity (\ref{sub}) we derive from (\ref{sol0}) the solution of the space-time fractional advection equation (\ref{fradveq})
\begin{equation}\label{sol1}
u(x,t)=(S_{\al,\be}(t)v)(x)=\int_0^x\psi_{\al,\be}(t,\tau)v(x-\tau)\, d\tau.
\end{equation}
The same result can be obtained by applying Laplace transform both in space and in time to solve problem (\ref{fradveq}).

The solution operator $S_{\al,\be}(t)$ in (\ref{sol1}) admits a bounded analytic extension to the sector $|\arg t|<\theta_0$, where $\theta_0$ is defined in (\ref{theta}). This is implied by the same property of the subordination kernel  $\psi_{\al,\be}(t,\tau)$ appearing in (\ref{sol1}).
The solution operators in the previous examples admit bounded analytic extensions to larger sectors than the mentioned in Theorem~\ref{thsub} (e.g. the Green function (\ref{GG1}) is bounded analytic for $|\arg t|<\pi$). This is due to the fact that the original semigroup $S_{1,1}(t)$ in the two preceding examples is already analytic. 

\section{Concluding remarks}

 In this work the subordination principle for space-time fractional evolution equations is analyzed. The subordination kernel is expressed in terms of Mainardi function and L\'evy extremal stable density, and coincides with the fundamental solution (restricted to $x>0$) of the one-dimensional Cauchy problem for the space-time fractional diffusion equation with the space-fractional Riesz-Feller derivative of order $\al$ and skewness $-\al$.
The existence of bounded analytic extension for the subordination kernel is studied. Applying the obtained subordination formulae some known results are recovered. Application to the multi-dimensional $\al$-fractional diffusion equation with $\al=1/2$ leads to a closed-form expression of the fundamental solution in terms of Tricomi's confluent hypergeometric function. 
In the one-dimensional case the expression reveals logarithmic infinite growth of the solution when $x\to 0$ for any fixed $t>0$. 

The potential of the subordination principle in the study of multi-dimensional space-time fractional diffusion equations will be further exploited in future works.

\section*{Acknowledgment}
 This work is supported by project ``Analytical and numerical methods for differential and integral equations..."
under bilateral agreement between Serbian Academy of Sciences and Arts and Bulgarian Academy of Sciences.

\section*{Appendix}  
 
The Caputo and the Riemann-Liouville fractional derivatives of order $\al\in(0,1)$, denoted in this work by $\DD_t^\al$ and $D_t^\al$, respectively,
are defined as follows 
\begin{equation*}
 \DD_t^\al =  J_t^{1-\al} (d/dt),\ \   D_t^\al = (d/dt) J_t^{1-\al},\ \ \ 0<\al<1,
\end{equation*} 
where $J_t^\al$ is the fractional Riemann-Liouville integral $$ J_t^\al f(t)=\frac{1}{\Gamma(\al)}\int_0^t (t-\tau)^{\al-1}f(\tau)\,d\tau,\ \ \al>0,$$
with $\Ga(\cdot)$ being the Gamma function. 

The two-parameter Mittag-Leffler function is an entire function defined by the series \cite{bookP, KST, GM}
\begin{equation*}
E_{\al,\be}(z)=\sum_{k=0}^{\infty}\frac{z^k}{\Ga(\al k+\be)},\ \ E_\al(z)=E_{\al,1}(z),
\ \ \al,\be, z\in \CC,\ \Re \al>0.
\end{equation*}
For $0<\al<2, \be\in\RR$, and $\mu$ such that $\al\pi/2<\mu<\min\{\pi,\al\pi\}$, the following asymptotic expansion holds \cite{bookP,KST}
\begin{equation}\label{MLas}
E_{\al,\be}(z)=-\sum_{k=1}^{N-1}\frac{z^{-k}}{\Gamma(\be-\al k)}+O(|z|^{-N}),\ \ \mu\le |\arg z|\le\pi,\ \ |z|\to \infty.
\end{equation}
This asymptotic expansion implies the estimate (\cite{bookP}, Theorem~1.6)
\begin{equation}\label{MLest}
\left|E_{\al,\be}(z)\right|\le\frac{C}{1+|z|},\ \ \mu\le |\arg z|\le\pi, |z|\ge 0.
\end{equation}
Recall the Laplace transform pair \cite{KST} 
\begin{equation}\label{pairs}
\mathcal{L}\left\{t^{\be-1}E_{\al,\be}(-\la t^\al);t \to s\right\}=\frac{s^{\al-\be}}{s^\al+\la},\ \ \ \Re\al>0, \ \la\in\RR, \ t>0. 
\end{equation}
We point out the following representation of the functions of Mittag-Leffler type $t^{\be-1}E_{\al,\be}(-\la t^\al)$ as Laplace transform of a spectral function $P_{\al,\be}$ (see
\cite{GM})
\begin{equation}\label{GM}
t^{\be-1}E_{\al,\be}(-\la t^\al)=\int_0^\infty e^{-rt} P_{\al,\be}(r;\la) \, dr,\ \ \ 
\end{equation}
where $\la>0$, $0<\al\le\be\le 1$, excluding the case $\al=\be=1$, and
$$
P_{\al,\be}(r;\la)= \frac{1}{\pi}\frac{ r^{\al}\sin\be\pi+\la\sin(\be-\al)\pi}{r^{2\al}+2\la r^\al\cos\al\pi+\la^2}r^{\al-\be}\ge 0.
$$

A function $f\in C^\infty(0,\infty)$ is said to be completely monotone if 
$$
(-1)^n f^{(n)}(t)\ge 0, \mbox{\ for\ all\ } t> 0, \ n=0,1,...
$$
The functions $e^{-a t^\al}$, $a\ge 0$, $0<\al\le 1$ and the Mittag-Leffler function $E_{\al,\be}(-t)$, $0<\al\le 1$, $\be\ge\al$, are well-known examples of completely monotone functions for $t>0$.
A $C^\infty$ function $g:(0,\infty)\to\RR$ is said to be a Bernstein function if it is nonnegative and its first derivative $g'(t)$ is a completely monotone function. 
The composition $f(g)$ of a completely monotone function $f$ with a Bernstein function $g$ is again completely monotone.

 The characterization of completely monotone functions is given by the Bernstein's theorem which states that 
a function $f:(0,\infty)\to\RR$ is completely monotone if and only if it can be represented as the Laplace transform of a nonnegative measure.
Bernstein's theorem and representation (\ref{GM}) show the complete monotonicity of the function of Mittag-Leffler type 
$t^{\be-1}E_{\al,\be}(-\la t^\al)$ for $0<\al\le\be\le 1$, since then $P_{\al,\be}(r;\la)\ge 0$, for any $r,\la>0$.

For more details on Mittag-Leffler-type functions we refer to \cite{KST, bookML, Bessel, PanevaFCAA18}. For the theory of completely monotone and Bernstein functions see \cite{CMF}.

The Tricomi's confluent hypergeometric function can be defined by the Laplace integral (\cite{AS}, Eq. 13.2.5)
\begin{equation}\label{tricomi}
U(a,c,z)=\frac{1}{\Gamma(a)}\int_0^\infty \xi^{a-1}(1+\xi)^{c-a-1} e^{-z\xi}\, d\xi, \ a>0, z>0.
\end{equation}
The following representations of Mittag-Leffler functions in terms of Tricomi's confluent hypergeometric function can be obtained from (\ref{GM}) for $\al=\be=1/2$   
\begin{equation}\label{MLT}
E_{1/2}(-t^{1/2})=\frac{1}{\sqrt\pi} U(1/2,1/2,t);\ \ t^{-1/2}E_{1/2,1/2}(-t^{1/2})=\frac{1}{2\sqrt\pi} U(3/2,3/2,t).
\end{equation}
For $a=c$ the Tricomi's confluent hypergeometric function (\ref{tricomi}) is related to the upper incomplete Gamma function
\begin{equation}\label{Gamma}
\Gamma(a,z)=\int_z^\infty \xi^{a-1} e^{-\xi}\, d\xi
\end{equation}
as follows (\cite{AS}, Eq. 13.6.28)
\begin{equation}\label{relation}
U(a,a,z)=e^z\Gamma(1-a,z).
\end{equation}
Integration by parts in (\ref{Gamma}) yields the following recurrence relation
\begin{equation}\label{rec}
\Gamma(a+1,z)=z^a e^{-z}+a\Gamma(a,z).
\end{equation}
The incomplete Gamma function (\ref{Gamma}) with $a=0$ gives the exponential integral  
\begin{equation}\label{ei}
\EEE_1(z)=\int_z^\infty\frac{e^{-\xi}}{\xi}\, d\xi,
\end{equation}
which satisfies $\EEE_1(z)=\Gamma(0,z)=e^{-z}U(1,1,z)$. For real or complex arguments off the negative real axis, it can be expressed as (\cite{AS}, Eq. 5.1.11)
	\begin{equation}\label{Esmallz}
	\EEE_1(z)=-\g-\ln z-\sum_{k=1}^\infty \frac{(-z)^k}{k (k!)},\ \ |\arg z|<\pi,
	\end{equation}
where $\g$ is the Euler constant. For large values of $\Re z$ the following approximation is valid \cite{BH}
\begin{equation}\label{Elargez}
	\EEE_1(z)\approx\frac{e^{-z}}{z}\sum_{k=0}^{N-1} \frac{ k!}{(-z)^k}. 
	\end{equation}
	For real positive values of the argument the exponential integral can be bracketed by elementary functions as follows (\cite{AS}, Eq. 5.1.20)
	\begin{equation}\label{br}
	 0.5 e^{-x }\ln \left( 1+2/x\right) < \EEE_1(x)< e^{-x }\ln \left( 1+1/x\right),\ \ x>0.
	\end{equation}



\end{document}